\numberwithin{equation}{section}
\numberwithin{figure}{section}
\theoremstyle{plain}
\newtheorem{thm}{Theorem}
  \theoremstyle{plain}
  \newtheorem{lem}{Lemma}
  \theoremstyle{plain}
  \newtheorem{cor}{Corollary}
\begin{document}

\title[Poincar\'{e} and Cheeger bounds for random walk on a graph]{A note on the Poincar\'{e} and Cheeger inequalities for simple random
walk on a connected graph}

\subjclass[2010]{Primary: 05C81; Secondary: 15A42.}

\keywords{Random Walk on Graph, Poincar\'{e} Inequalities, Cheeger Inequalities.}

\maketitle
\begin{center}
\author{John Pike\\ \small Department of Mathematics\\[-0.8ex] \small University of Southern California\\ \small \texttt{jpike@usc.edu}}
\par\end{center}

\begin{abstract}
In 1991, Persi Diaconis and Daniel Stroock obtained two canonical
path bounds on the second largest eigenvalue for simple random walk
on a connected graph, the Poincar\'{e} and Cheeger bounds, and they
raised the question as to whether the Poincar\'{e} bound is always
superior. In this paper, we present some background on these issues,
provide an example where Cheeger beats Poincar\'{e}, establish some
sufficient conditions on the canonical paths for the Poincar\'{e}
bound to triumph, and show that there is always a choice of paths
for which this happens.
\end{abstract}

\section{Background and Notation}

Let $G=G(X,E)$ be any simple, connected, undirected, and unweighted
graph with edge set $E$ and finite vertex set $X$. For each $(x,y)\in X\times X$
with $x\neq y$, choose a unique oriented path $\gamma_{x,y}$ from
$x$ to $y$. Let $\Gamma$ denote this collection of canonical paths
(also known as a routing) and define

\[
n=\left|X\right|,\; d=\max_{x\in X}\textrm{deg}(x),\;\gamma_{*}=\max_{\gamma\in\Gamma}\left|\gamma\right|,\;\text{and}\;\, b=\max_{\overrightarrow{e}\in\overrightarrow{E}}\left|\{\gamma\in\Gamma:\gamma\ni\overrightarrow{e}\}\right|.\]
Note that in the definition of $b$, the \textit{bottleneck number}
of $(G,\Gamma)$, the maximum is taken over directed edges and so
is to be distinguished from the related concept of the \textit{edge-forwarding
index} of $(G,\Gamma)$ (see \cite{XuXu}). We distinguish directed
and undirected edges by adorning the former with an arrow, and if
$e$ is an undirected edge connecting vertices $x$ and $y$, we write
$e=\{x,y\}$. Throughout this paper, the term \textit{path} refers
to a sequence of vertices where successive terms are connected by
an edge. Repeated vertices are allowed, but no edge may appear more
than once. (Some authors refer to such a path as a \textit{trail}.)
If $\overrightarrow{e}$ is an edge from $x$ to $y$, we write $\gamma\ni e$,
$\gamma\ni\overrightarrow{e}$ if the vertex sequence defining $\gamma$
contains $x$ and $y$ as consecutive terms, $x$ preceding $y$ in
the latter case.

A simple random walk on $G$ begins at some vertex $x_{0}$ and then
proceeds by moving to a neighboring vertex chosen uniformly at random.
This defines a Markov process $\{X_{k}\}$ with state space $X$,
transition probabilities \[
K(x,y)=\begin{cases}
\frac{1}{\textrm{deg}(x)}, & \{x,y\}\in E\\
0, & \textrm{otherwise}\end{cases},\]
and stationary distribution \[
\pi(x)=\frac{\textrm{deg}(x)}{2\left|E\right|}.\]
This Markov chain is irreducible and reversible, thus the operator
$K$ defined by \[
[K\phi](x)=\sum_{y\in X}K(x,y)\phi(y)\]
is a self-adjoint contraction on $L^{2}(\pi)$ with real eigenvalues
\[
1=\beta_{0}>\beta_{1}\geq\ldots\geq\beta_{n-1}\geq-1\]

\begin{flushleft}
whose corresponding eigenfunctions are orthogonal with respect to
the inner product \[
\left\langle \phi,\psi\right\rangle _{\pi}=\sum_{x\in X}\phi(x)\psi(x)\pi(x).\]

\par\end{flushleft}

\begin{flushleft}
It is of interest to estimate $\beta_{*}=\max\left\{ \beta_{1},\left|\beta_{n-1}\right|\right\} $
as this quantity can be used to bound the \textit{r}-step distance
to stationarity. For example, when $K$ is reversible with respect
to $\pi$, letting $K_{x}^{r}$ denote the distribution of $X_{r}$
given that $X_{0}=x$, we have the classical bound on the total variation
distance to stationarity \cite{DiaStr} \[
\left\Vert K_{x}^{r}-\pi\right\Vert _{TV}\leq\frac{1}{2}\beta_{*}^{r}\sqrt{\frac{1-\pi(x)}{\pi(x)}}.\]
More generally, consideration of the Jordan normal form of the transition
matrix shows that the exponential rate of convergence of any ergodic
Markov chain is governed by the second largest eigenvalue (in magnitude).
Because one can often ensure that $\beta_{1}\geq\left|\beta_{n-1}\right|$
- by adding holding probabilities, for example - much of the research
has focused on bounding $\beta_{1}$. 
\par\end{flushleft}

In the above setting, Diaconis and Stroock \cite{DiaStr} give the
Poincar\'{e} inequality \[
\beta_{1}\leq1-\frac{2\left|E\right|}{d^{2}\gamma_{*}b}\]
and the Cheeger inequality \[
\beta_{1}\leq1-\frac{\left|E\right|^{2}}{2d^{4}b^{2}}.\]
These inequalities are corollaries of results that hold for all irreducible,
reversible Markov chains and are based on geometric techniques (derived
by their namesakes) for bounding the spectral gap of the Laplacian
on a Riemannian manifold. Both ultimately rely on the variational
characterization of the eigenvalues of the discrete Laplacian $L=I-K$:
\[
1-\beta_{1}=\inf_{\phi\textrm{ nonconstant}}\frac{\mathcal{E}(\phi,\phi)}{\textrm{Var}_{\pi}(\phi)}\]
where $\mathcal{E}(\phi,\phi)$ is the Dirichlet form \[
\mathcal{E}(\phi,\phi)=\left\langle \phi,L\phi\right\rangle _{\pi}=\frac{1}{2}\sum_{x,y\in X}[\phi(x)-\phi(y)]^{2}\pi(x)K(x,y)\]
and \[
\textrm{Var}_{\pi}(\phi)=\left\langle \phi-E_{\pi}[\phi],\phi-E_{\pi}[\phi]\right\rangle _{\pi}=\frac{1}{2}\sum_{x,y\in X}[\phi(x)-\phi(y)]^{2}\pi(x)\pi(y)\]
is the variance of $\phi$ with respect to $\pi$, $E_{\pi}[\phi]=\left\langle \phi,1\right\rangle _{\pi}$
being the corresponding expectation. This characterization follows
easily from the Courant-Fischer theorem \cite{HoJo} and the properties
of $K$.

The use of canonical paths in this framework originated in the work
of Jerrum and Sinclair on approximating the permanent of a $0$-$1$
matrix \cite{JerSin}. For the purposes of this exposition, we will
only be considering the aforementioned inequalities for random walks
on graphs, but it should be noted that they are both overestimates
of the more general Poincar\'{e} and Cheeger bounds. Also, observe
that the variational characterization immediately gives lower bounds
on $\beta_{1}$ by evaluating the Rayleigh quotient at any nonconstant
$\phi$. The interested reader is encouraged to consult \cite{DiaStr}
for the general bounds, their proofs, and the derivation of these
particular cases for random walk on a graph. 

Though both inequalities are valid for any choice of $\Gamma$, their
utility hinges upon a clever selection of canonical paths. In particular,
one seeks to minimize $\gamma_{*}b$ for Poincar\'{e} and $b$ for
Cheeger. It was noted in \cite{DiaStr} that the Poincar\'{e} bound
is often superior, regardless of the choice of $\Gamma$, but it was
left as an open question whether this is always the case. A little
algebra shows that this is equivalent to asking whether $4d^{2}b\geq\gamma_{*}\left|E\right|$
for all choices of $\Gamma$. In addition to better understanding
eigenvalue estimates for simple random walk on connected graphs, knowledge
of the conditions under which the preceding inequality holds is of
interest in its own right. For example, bounds relating standard graph
theoretic quantities with measures of bottlenecking may be useful
in applications involving network management or optimal distribution.

Jason Fulman and Elizabeth Wilmer \cite{FulWil} have shown that Poincar\'{e}
beats Cheeger for simple random walk on trees, for which there is
only one choice of canonical paths. They also show that for any vertex
transitive graph, such as the Cayley graph of a group with a symmetric
generating set, and any collection of paths $\Gamma,$ one has the
inequality $bd^{2}\geq D\left|E\right|$ where $D=\textrm{diam}(G)$.
Thus Poincar\'{e} beats Cheeger for random walk on vertex transitive
graphs whenever $\gamma_{*}\leq4D$. In particular, this implies that
the Poincar\'{e} bound is superior in these graphs when the paths
are taken to be geodesics. 

For more background and examples, the reader is referred to \cite{DiaStr}
and \cite{FulWil}. Other useful references include \cite{JerSin,Sin,Chung}.
By way of a counterexample, we answer the question posed by Diaconis
and Stroock in the negative. Moreover, we extend the work of Fulman
and Wilmer by providing more general criteria for the Poincar\'{e}
bound to prevail.

\section{A Counterexample}

Consider the case when $G$ is a complete graph on $n$ vertices.
Then $d=n-1$ and $\left|E\right|=\binom{n}{2}$. Now let $\gamma$
be a Hamiltonian path with initial vertex $x_{0}$ and terminal vertex
$y_{0}$, and define a routing $\Gamma$ by letting $\gamma_{x,y}$
be the unique (length 1) oriented geodesic from $x$ to $y$ for each
$(x,y)\in X\times X\setminus\{(x_{0},y_{0})\}$ and letting $\gamma_{x_{0},y_{0}}=\gamma$.
Then $\gamma_{*}=\left|\gamma\right|=n-1$ and $b=2$. The conjectured
inequality is thus \[
8(n-1)^{2}=4d^{2}b\geq\gamma_{*}\left|E\right|=(n-1)\left(\frac{n^{2}-n}{2}\right).\]
Because this fails when $n\geq17$, there are infinitely many graphs
where Cheeger beats Poincar\'{e} for some choice of canonical paths,
hence the Poincar\'{e} bound is not uniformly superior to the Cheeger
bound. Observe that since the left hand side is $\Omega(n^{2})$ and
the right hand side is $\Omega(n^{3})$, the inequality cannot be
salvaged by adjusting constants. Also, this particular choice of paths
is kind of an overkill in that the argument actually shows that if
one path in $\Gamma$ has length greater than $16\frac{n-1}{n}$ and
the rest are geodesics, then we still get a counterexample. At the
other extreme, observe that when $n$ is odd, we can take the long
path, $\gamma$, to be an Eulerian cycle with an edge deleted, and
when $n$ is even, we can take $\gamma$ to be an Eulerian cycle on
a subgraph on $n-1$ vertices with an edge deleted. These constructions
show that counterexamples appear in all complete graphs on $n\geq7$
vertices.

Let us now examine the case of a complete graph on $n\geq3$ vertices
a little closer. The transition probabilities are given by \[
K(x,y)=\frac{1}{n-1}\left(1-\delta_{x}(y)\right),\]
so it is easy to see that $G$ has $1$ as a simple eigenvalue and
its only other eigenvalue is $-\frac{1}{n-1}$ with multiplicity $n-1$.
The preceding analysis shows that if we take $\Gamma$ as above, then
the Poincar\'{e} bound is \[
\beta_{1}\leq1-\frac{n}{2(n-1)^{2}}\]
and the Cheeger bound is \[
\beta_{1}\leq1-\frac{n^{2}}{32(n-1)^{2}}.\]
Neither bound is remotely sharp, but Cheeger certainly comes out on
top for $n$ sufficiently large. If we take $\Gamma$ to be the unique
set of geodesics, in which case $\gamma_{*}=b=1$, then we obtain
a Poincar\'{e} bound of \[
1-\frac{2\left|E\right|}{d^{2}\gamma_{*}b}=-\frac{1}{n-1}.\]
The corresponding Cheeger bound is \[
1-\frac{\left|E\right|^{2}}{2d^{4}b^{2}}=1-\frac{1}{8}\left(\frac{n}{n-1}\right)^{2}.\]
Because $\gamma_{*}$ and $b$ are small as they can be in this case,
these are the best possible bounds. 

Thus even though we have an example where Cheeger beats Poincar\'{e}
(on a vertex transitive graph), it involves a choice of paths that
yields terrible bounds while the best possible choice of paths gives
an ideal Poincar\'{e} bound that is much better than the optimal Cheeger
bound. Perhaps then the relevant question is whether the best possible
Poincar\'{e} bound is always better than the best possible Cheeger
bound. If the Poincar\'{e} v. Cheeger conjecture is stated in terms
of optimal bounds, then one can stipulate that the paths do not have
repeated vertices as allowing them to contain cycles can only increase
the bottleneck number and the longest path length. However, the counterexample
shows that this restriction alone does not guarantee that the Poincar\'{e}
bound is uniformly better than the Cheeger bound. Moreover, for an
arbitrary graph, there is no reason to suspect that the optimal Poincar\'{e}
bound is realized by a choice of paths which gives the optimal Cheeger
bound since a choice of paths which minimizes $\gamma_{*}b$ need
not minimize $b$ (and vice versa), so this question is more involved
than asking when $4d^{2}b\geq\gamma_{*}\left|E\right|$. Indeed, it
seems that a resolution of this issue would require some characterization
of the collections of canonical paths that yield the best bounds,
which is probably both the most important and the most difficult problem
associated with these path-based eigenvalue inequalities. For example,
if $\pi(G,\Gamma)$ denotes the edge-forwarding index of $(G,\Gamma)$,
which is defined just like the bottleneck number but in terms of undirected
edges, then it is known \cite{HMSO} that the problem of determining
whether a given integer upper-bounds the minimum of $\pi(G,\Gamma)$
over all minimal, symmetric, and consistent routings $\Gamma$ is
NP-complete when $\textrm{Diam}(G)\geq3$.

At this point, we observe that the construction of the counterexample
involved a choice of canonical paths with one exceptionally long path
and many shorter paths. This is no coincidence. For any choice of
paths, $\Gamma$, there are $M=\sum_{\gamma\in\Gamma}\left|\gamma\right|$
oriented edges counting multiplicity. Since there are a total of $2\left|E\right|$
oriented edges in the graph, the pigeonhole principle implies $b\geq\frac{M}{2\left|E\right|}$.
In terms of the average path length, $\bar{\gamma}=\frac{1}{n^{2}}\sum_{\gamma\in\Gamma}\left|\gamma\right|$,
we get the inequality $b\geq\frac{n^{2}\bar{\gamma}}{2\left|E\right|}$.
(Note that we are including the $n$ empty paths in this average.)

This shows that a sufficient condition for Poincar\'{e} to beat Cheeger
is that $2d^{2}n^{2}\bar{\gamma}\geq\gamma_{*}\left|E\right|^{2}$.
Because $2\left|E\right|=\sum_{x\in X}\textrm{deg}(x)\leq dn$, it
follows that Poincar\'{e} beats Cheeger whenever $8\bar{\gamma}\geq\gamma_{*}$.
This idea was used implicitly in \cite{FulWil}, but the author feels
that it is significant enough to be stated directly as
\begin{thm}
For any simple connected graph $G$, if $\Gamma$ is a set of canonical
paths that satisfies $8\bar{\gamma}\geq\gamma_{*}$, then \textup{$4d^{2}b\geq\gamma_{*}\left|E\right|$,
}hence the Poincar\'{e} bound is superior to the Cheeger bound for
this choice of paths. 
\end{thm}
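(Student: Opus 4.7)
The plan is to assemble the chain of inequalities already foreshadowed in the paragraph preceding the theorem, converting the pigeonhole observation into the desired comparison $4d^{2}b\geq\gamma_{*}\left|E\right|$.

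First I would record the two key structural inputs. The pigeonhole step: summing the lengths of all paths in $\Gamma$ counts directed edge traversals with multiplicity, giving $M=\sum_{\gamma\in\Gamma}\left|\gamma\right|$ such traversals distributed over the $2\left|E\right|$ oriented edges, so some oriented edge is used at least $M/(2\left|E\right|)$ times, i.e.\ $b\geq M/(2\left|E\right|)$. Since $\bar{\gamma}=M/n^{2}$ by definition (including the $n$ empty paths), this rewrites as $b\geq n^{2}\bar{\gamma}/(2\left|E\right|)$. The handshake step: $2\left|E\right|=\sum_{x\in X}\mathrm{deg}(x)\leq dn$, hence $\left|E\right|\leq dn/2$.

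Next I would combine these two bounds. Multiplying the pigeonhole inequality by $4d^{2}$ gives
\[
4d^{2}b\;\geq\;\frac{2d^{2}n^{2}\bar{\gamma}}{\left|E\right|}.
\]
On the other hand, the handshake estimate yields $\left|E\right|^{2}\leq d^{2}n^{2}/4$, so
\[
\gamma_{*}\left|E\right|\;=\;\frac{\gamma_{*}\left|E\right|^{2}}{\left|E\right|}\;\leq\;\frac{\gamma_{*}d^{2}n^{2}}{4\left|E\right|}.
\]
Comparing these two displays, it suffices to verify $2d^{2}n^{2}\bar{\gamma}/\left|E\right|\geq\gamma_{*}d^{2}n^{2}/(4\left|E\right|)$, which simplifies precisely to the hypothesis $8\bar{\gamma}\geq\gamma_{*}$.

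Finally I would invoke the algebraic equivalence noted earlier in the paper: the conjectured inequality ``Poincar\'{e} beats Cheeger'' is equivalent to $4d^{2}b\geq\gamma_{*}\left|E\right|$, so the above chain establishes the stated conclusion. There is no real obstacle here; the only thing worth being careful about is the bookkeeping in the definition of $\bar{\gamma}$ (the factor $n^{2}$ comes from counting all ordered pairs, including the $n$ trivial ones, which is why the constant $8$ rather than something smaller appears in the hypothesis) and the direction of the handshake inequality, which goes the right way precisely because $d$ is the \emph{maximum} degree.
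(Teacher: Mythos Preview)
Your argument is correct and is essentially identical to the paper's own derivation: the paper uses the same pigeonhole bound $b\geq n^{2}\bar{\gamma}/(2|E|)$ together with the handshake estimate $2|E|\leq dn$ to reduce $4d^{2}b\geq\gamma_{*}|E|$ to the hypothesis $8\bar{\gamma}\geq\gamma_{*}$. The only cosmetic difference is that the paper passes through the intermediate sufficient condition $2d^{2}n^{2}\bar{\gamma}\geq\gamma_{*}|E|^{2}$ rather than your equivalent two-display comparison.
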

Thus in order for the Cheeger inequality to prevail, the longest path
length must exceed the average path length considerably. When attention
is restricted to geodesic paths, this shows that Poincar\'{e} beats
Cheeger whenever the mean distance is at least one eighth of the diameter.
In addition to providing a nice general criterion, this observation
shows that Poincar\'{e} beats Cheeger along geodesics for a larger
class of graphs than just those which are vertex transitive.

Recalling the inequality $b\geq\frac{M}{2\left|E\right|}$ where $M=\sum_{\gamma\in\Gamma}\left|\gamma\right|$,
one sees that $M$ is minimized when $\Gamma$ is taken to be a collection
of geodesics. This choice also minimizes $\gamma_{*}$, though there
could be other choices of canonical paths not consisting entirely
of geodesics for which $\gamma_{*}=\textrm{Diam}(G)$. This suggests
that geodesic paths are often a good starting point for finding optimal
bounds. Of course $\frac{M}{2\left|E\right|}$ is only a lower bound
for $b$ and there are often many choices for $\Gamma$ such that
all paths are geodesics, so this is in no way a sufficient criterion
for optimization. Indeed, one can sometimes reduce $b$ without increasing
$\gamma_{*}$ by taking a slight detour in traveling between certain
vertices. A little thought will show that this is the case in the
following graph. 

\begin{center}
\includegraphics[width=5cm,height=3cm]{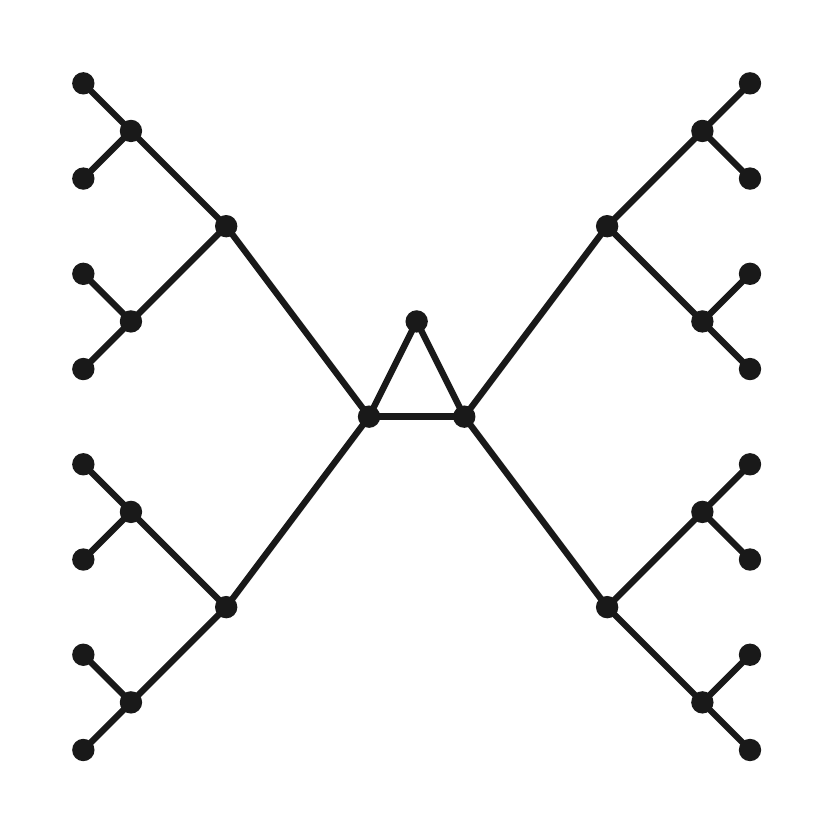}
\par\end{center}

Still, for the reasons indicated above and the fact that geodesics
are often among the more obvious choices for canonical paths, it would
be interesting to know more about when Poincar\'{e} beats Cheeger
along geodesics and when geodesic routings yield optimal bounds (as
in the case of complete graphs). All of the examples in \cite{DiaStr}
and \cite{FulWil} used geodesic paths, but the above graph shows
that this is not always the best choice.

\section{Spanning Trees}

In this section, we show that if $\Gamma$ is taken to be the set
of canonical paths along any spanning tree of any simple connected
graph, then the Poincar\'{e} bound is strictly better than the Cheeger
bound. Specifically, we have $d^{2}b\geq\gamma_{*}\left|E\right|$
for such $\Gamma$. Moreover, the inequality is strict whenever the
graph has more than two vertices, or equivalently, when $d\geq2$.
Since every connected graph has at least one spanning tree, this will
show that there is always a choice of paths for which Poincar\'{e}
beats Cheeger. 

To begin, let $G=G(X,E)$ be a simple connected graph on $\left|X\right|=n$
vertices and let $T$ be any spanning tree for $G$. Then we can define
$\Gamma=\Gamma(T)$ to be the (unique) set of paths along $T$. We
will show that Poincar\'{e} beats Cheeger for such routings by first
dealing with the case where $\gamma_{*}$ is large and then using
the results of \cite{FulWil} to handle the remaining case. We note
at the outset that as there is only one connected graph with $d=1$
and it is easily verified that $d^{2}b=1=\gamma_{*}\left|E\right|$
in this case, we can assume throughout that $d\geq2$. Now let us
say that that a routing is subordinate with respect to a path $\gamma_{x,y}=v_{0},v_{1},...,v_{m}$
if for all $0\leq i<j\leq m$ with $v_{i}\neq v_{j}$, $\gamma_{v_{i},v_{j}}=v_{i},v_{i+1},...,v_{j}$.
With this terminology, we can dispense with the large $\gamma_{*}$
case using the following lemma.
\begin{lem}
If $\Gamma$ is subordinate with respect to a longest path $\gamma$
and $\gamma_{*}=\left|\gamma\right|>\frac{4\left|E\right|}{d^{2}}-2$,
then $d^{2}b>\gamma_{*}\left|E\right|$.\end{lem}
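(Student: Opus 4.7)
The plan is to derive a lower bound on $b$ by counting canonical paths through a middle edge of $\gamma$, and then to match this against the hypothesis.

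Write $\gamma = v_0, v_1, \ldots, v_m$ with $m = \gamma_*$, and observe first that the vertices of $\gamma$ may be taken to be distinct. Indeed, if $v_j = v_{j'}$ for some $j < j'$, then subordination (applied against a third index $k$ outside $[j, j']$ with $v_k$ different from the repeated value, which exists since $m \geq 1$ and consecutive vertices differ) would force two subpaths of $\gamma$ of different lengths to coincide as the single canonical path between their common endpoints, a contradiction. A short case analysis on whether there is room to pick $k$ below $j$ or above $j'$ handles the potential issue at the endpoints of $\gamma$.

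Next, for each $i \in \{0, 1, \ldots, m-1\}$, consider the directed edge $\overrightarrow{e}_i = (v_i, v_{i+1})$. By subordination, every index pair $(j, k)$ with $0 \leq j \leq i$ and $i+1 \leq k \leq m$ produces a canonical path $\gamma_{v_j, v_k}$ equal to the subpath $v_j, v_{j+1}, \ldots, v_k$ of $\gamma$, which contains $\overrightarrow{e}_i$. Since the $v_\ell$ are distinct, these $(i+1)(m-i)$ index pairs correspond to $(i+1)(m-i)$ distinct canonical paths through $\overrightarrow{e}_i$, so $b \geq (i+1)(m-i)$. A one-line optimization (the product is maximized at $i \approx (m-1)/2$) gives $b \geq m(m+2)/4$.

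Finally I would rewrite the hypothesis $\gamma_* > 4|E|/d^2 - 2$ as $d^2(m+2) > 4|E|$ and multiply through by $m/4 > 0$ to obtain $d^2 m(m+2)/4 > m|E| = \gamma_* |E|$; combining with $d^2 b \geq d^2 m(m+2)/4$ yields the desired $d^2 b > \gamma_* |E|$. The only mildly delicate step is the initial justification that $\gamma$ has distinct vertices, since the paper's convention permits paths with repeated vertices; once that is dispatched the remainder is a clean counting-plus-arithmetic argument.
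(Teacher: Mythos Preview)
Your proof is correct and follows essentially the same route as the paper's: bound $b$ below by counting subordinate subpaths through a central edge of $\gamma$ to obtain $b \geq \gamma_*(\gamma_*+2)/4$, then combine with the hypothesis via the same arithmetic. Your additional step verifying that the vertices of $\gamma$ are distinct actually fills a small gap the paper leaves implicit, since its path convention permits repeated vertices but its counting argument tacitly assumes none occur.
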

\begin{proof}
\[
b\geq\left\lfloor \frac{\gamma_{*}+1}{2}\right\rfloor \left((\gamma_{*}+1)-\left\lfloor \frac{\gamma_{*}+1}{2}\right\rfloor \right)\geq\frac{\gamma_{*}(\gamma_{*}+2)}{4}\]
because if $\gamma=v_{0},v_{1},...,v_{\gamma_{*}}$, then for all
$0\leq i<j\leq\gamma_{*},$ $\gamma_{v_{i},v_{j}}=v_{i},v_{i+1},...,v_{j}$,
so there are $\left\lfloor \frac{\gamma_{*}+1}{2}\right\rfloor $
initial vertices and $(\gamma_{*}+1)-\left\lfloor \frac{\gamma_{*}+1}{2}\right\rfloor $
terminal vertices of paths through a central edge of $\gamma$. Consequently,
if $\gamma_{*}>\frac{4\left|E\right|}{d^{2}}-2$ (thus $d^{2}\gamma_{*}+2d^{2}>4\left|E\right|$),
then \[
d^{2}b\geq d^{2}\frac{\gamma_{*}(\gamma_{*}+2)}{4}=\gamma_{*}\frac{d^{2}\gamma_{*}+2d^{2}}{4}>\gamma_{*}\left|E\right|.\]

\end{proof}
Since a spanning tree routing is subordinate with respect to each
of its paths, Lemma 1 shows that if $\Gamma=\Gamma(T)$ is the set
of canonical paths along a spanning tree $T$ of $G,$ then the Poincar\'{e}
bound is strictly better than the Cheeger bound whenever $\gamma_{*}>\frac{4\left|E\right|}{d^{2}}-2$. 

We now note that for any connected graph $G$ on $n$ vertices, we
must have $\left|E\right|\geq n-1$. When $\left|E\right|=n-1$, $G$
is a tree and there is only one choice of canonical paths. In this
case, every path is subordinate with respect to the longest path,
which has length $\gamma_{*}=\left|E\right|=n-1$, so the first line
in the proof of Lemma 1 shows that \[
b\geq\frac{\gamma_{*}(\gamma_{*}+2)}{4}>\frac{1}{4}\gamma_{*}\left|E\right|.\]
Since $d\geq2$ whenever $n>2$, we see that if $G$ is a graph on
$n>2$ vertices with $\left|E\right|=n-1$, then $d^{2}b>\gamma_{*}\left|E\right|$.
Consequently, we can assume henceforth that $\left|E\right|\geq n$. 

For the remaining case, we appeal to Theorem 4 in \cite{FulWil},
the proof of which is included for completeness. 
\begin{lem}
Let $T$ be a tree with $n$ vertices having maximal degree $d_{T}\geq2$
and bottleneck number $b_{T}$. Then \[
b_{T}\geq\frac{(n-1)^{2}}{d_{T}^{2}}.\]
\end{lem}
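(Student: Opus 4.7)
The plan is to use the fact that in a tree $T$, the canonical path between any two distinct vertices is the unique simple path joining them. Removing any edge $e$ of $T$ therefore splits $T$ into two components of sizes $a_e$ and $n-a_e$, and the number of canonical paths that traverse either directed version of $e$ is precisely $a_e(n-a_e)$. Consequently $b_T \geq \max_e a_e(n-a_e)$, and it suffices to exhibit an edge whose two components are both of size at least roughly $(n-1)/d_T$.

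To produce such an edge I would pick a centroid $c$ of $T$, a vertex with the property that every component of $T \setminus \{c\}$ has at most $n/2$ vertices; such a vertex is known to exist in every tree. Writing $d_c \leq d_T$ for the degree of $c$ and $n_1, \ldots, n_{d_c}$ for the sizes of the subtrees of $T$ hanging off $c$, we have $\sum_i n_i = n-1$ and $n_i \leq n/2$ for each $i$. Pigeonhole then yields an index $i$ with $n_i \geq (n-1)/d_c \geq (n-1)/d_T$. Letting $M = n_i$ for this choice and $e$ the corresponding edge at $c$, the centroid property gives $n-M \geq n/2$, so
\[
b_T \;\geq\; M(n-M) \;\geq\; M \cdot \frac{n}{2} \;\geq\; \frac{n-1}{d_T}\cdot\frac{n}{2} \;=\; \frac{n(n-1)}{2 d_T}.
\]
The argument closes with the algebraic observation that $\frac{n(n-1)}{2d_T} \geq \frac{(n-1)^2}{d_T^2}$ rearranges to $nd_T \geq 2(n-1)$, which holds since $d_T \geq 2$ forces $nd_T \geq 2n > 2(n-1)$.

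The principal obstacle is choosing the right base vertex. A naive start at a vertex of maximum degree can fail badly: one of its subtrees may contain almost the entire rest of the tree, so that $M$ is close to $n-1$ and $n-M$ is tiny, making $M(n-M)$ far too small to reach the target. The centroid circumvents this by guaranteeing $M \leq n/2$, which is exactly what converts the pigeonhole lower bound on $M$ into a matching lower bound on $M(n-M)$. Beyond that choice, the counting formula for paths across an edge, the pigeonhole step, and the closing algebraic check are all routine.
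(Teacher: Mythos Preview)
Your argument is correct and follows essentially the same route as the paper: identify a vertex whose pendant subtrees each have at most $n/2$ vertices, use pigeonhole among those subtrees to find one of size at least $(n-1)/d_T$, and finish with a short algebraic estimate. The only difference is cosmetic---you invoke the centroid by name, whereas the paper locates the same vertex via an edge-orientation and degree-counting argument---and your final inequality $\frac{n(n-1)}{2d_T}\ge\frac{(n-1)^2}{d_T^2}$ is a slight variant of the paper's $\frac{n-1}{d_T}\bigl(n-\frac{n-1}{d_T}\bigr)\ge\frac{(n-1)^2}{d_T^2}$.
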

\begin{proof}
Since $T$ is a tree with $n$ vertices, $T$ has $n-1$ edges, and
for any oriented edge $e$, the number of pairs of vertices $(x,y)$
with $e\in\gamma_{x,y}$ is of the form $k(n-k)$, $1\leq k\leq\frac{n}{2}$
where deletion of edge $e$ cuts $T$ into two components of size
$k$ and $n-k$, respectively. Because $k(n-k)$ is increasing in
$k$ for $1\leq k\leq\frac{n}{2}$, it is enough to show there exists
an edge $e^{*}$ whose removal divides $T$ into two components, the
smaller of which has size at least $\frac{n-1}{d_{T}}$. Given such
an edge, we have \[
b_{T}\geq\left(\frac{n-1}{d_{T}}\right)\left(n-\frac{n-1}{d_{T}}\right)\geq\left(\frac{(d_{T}-1)(n-1)^{2}}{d_{T}^{2}}\right)\geq\frac{(n-1)^{2}}{d_{T}^{2}}.\]

Thus we need only to demonstrate the existence of $e^{*}$. To this
end, note that if there is an edge which cuts $T$ into two pieces
of equal size, then we may take this edge to be $e^{*}$. Otherwise,
the deletion of any edge divides $T$ into two pieces of size $k$
and $n-k$, respectively, where $k<n-k$. In this case, we can orient
the edges by directing each edge from its endpoint in the component
of size $k$ to its endpoint in the component of size $n-k$. Because
there are $n$ vertices and $n-1$ edges, there must exist a vertex
$v^{*}$ with indegree $0$. Since $\textrm{deg}(v^{*})\leq d_{T}$,
there must be some edge leaving $v^{*}$ whose deletion cuts $T$
into two components, the smaller of which has size at least $\frac{n-1}{d_{T}}$.
Calling this edge $e^{*}$ establishes the result.
\end{proof}
Now by Lemma 1, we can assume that $\gamma_{*}\leq\frac{4\left|E\right|}{d^{2}}-2$.
Also, since $d\geq2$ implies there are at least 3 vertices in $G$
(and thus in $T$), the maximal degree of the vertices in $T$ is
$d_{T}\geq2$. Because $d\geq d_{T}$, Lemma 2 shows that \[
b=b_{T}\geq\frac{(n-1)^{2}}{d_{T}^{2}}\geq\frac{(n-1)^{2}}{d^{2}}.\]
Therefore, since $2\left|E\right|=\sum_{x\in X}deg(x)$, thus $2\left|E\right|\leq nd$,
and we may assume also that $\left|E\right|\geq n$, we have \[
\gamma_{*}\left|E\right|\leq\left(\frac{4\left|E\right|}{d^{2}}-2\right)\left|E\right|=\frac{(2\left|E\right|)^{2}}{d^{2}}-2\left|E\right|\leq n^{2}-2n<d^{2}\frac{(n-1)^{2}}{d^{2}}\leq d^{2}b.\]

We record the above result as
\begin{thm}
For any simple connected graph $G$, if $\Gamma$ is taken to be the
set of paths along any spanning tree of $G$, then $d^{2}b\geq\gamma_{*}\left|E\right|$
for this choice of $\Gamma$. If $G$ has at least 3 vertices, then
the inequality is strict.
\end{thm}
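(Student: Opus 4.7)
The plan is a two-lemma pincer driven by a dichotomy on the longest canonical path length $\gamma_*$. First I dispose of the trivial $d=1$ case: the only connected graph with maximum degree $1$ is $K_{2}$, and one checks $d^{2}b = 1 = \gamma_{*}\left|E\right|$ directly, which is the (nonstrict) equality allowed by the statement. Henceforth I assume $d\geq 2$, which immediately forces $n\geq 3$ and so a nontrivial spanning tree.

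The key structural observation enabling Lemma 1 is that a spanning tree routing $\Gamma = \Gamma(T)$ is subordinate with respect to every one of its paths: any subpath of a path in $T$ is again the unique path in $T$ between its endpoints. So Lemma 1 can be invoked with the longest path, and whenever $\gamma_{*} > \frac{4\left|E\right|}{d^{2}}-2$ it immediately yields $d^{2}b > \gamma_{*}\left|E\right|$.

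In the complementary range $\gamma_{*} \leq \frac{4\left|E\right|}{d^{2}}-2$, I would invoke Lemma 2. Writing $d_{T}$ for the maximum degree of $T$, the assumption $d \geq 2$ combined with $n \geq 3$ guarantees $d_{T} \geq 2$, so Lemma 2 gives $b = b_{T} \geq (n-1)^{2}/d_{T}^{2} \geq (n-1)^{2}/d^{2}$. Combining this with the hypothesis on $\gamma_{*}$ and the elementary bound $2\left|E\right| = \sum_{x}\deg(x) \leq nd$, one computes
\[
\gamma_{*}\left|E\right| \leq \left(\frac{4\left|E\right|}{d^{2}}-2\right)\left|E\right| = \frac{(2\left|E\right|)^{2}}{d^{2}} - 2\left|E\right| \leq n^{2} - 2\left|E\right|.
\]
When $\left|E\right|\geq n$ this gives $\gamma_{*}\left|E\right| \leq n^{2}-2n < (n-1)^{2} \leq d^{2}b$, which is the strict inequality sought. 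The only way a connected graph on $n\geq 3$ vertices can fail $\left|E\right|\geq n$ is $\left|E\right| = n-1$, i.e.\ $G$ is itself a tree with its unique routing and $\gamma_{*}=n-1$. In that boundary case the first displayed inequality inside the proof of Lemma 1 already gives $b \geq \gamma_{*}(\gamma_{*}+2)/4 > \gamma_{*}\left|E\right|/4$, and $d\geq 2$ multiplies this up to $d^{2}b > \gamma_{*}\left|E\right|$.

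The main obstacle is not any deep ingredient but the bookkeeping in the case split: one has to verify that the arithmetic $n^{2}-2n < (n-1)^{2}$, together with the supplementary treatment of the tree boundary $\left|E\right|=n-1$, truly closes every avenue by which strictness could fail for $n\geq 3$, while still returning the sharp equality in the $d=1$ base case. Once this is pinned down, the two lemmas slot into their respective regimes cleanly and the proof reduces to a short calculation.
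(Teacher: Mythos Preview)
Your plan is essentially identical to the paper's own argument: the same $d=1$ base case, the same observation that spanning-tree routings are subordinate to every path, the same dichotomy on $\gamma_{*}$ versus $\frac{4|E|}{d^{2}}-2$ with Lemma~1 handling the large range, the same invocation of Lemma~2 together with $2|E|\le nd$ to close out the small range when $|E|\ge n$, and the same separate treatment of the tree boundary $|E|=n-1$ via the inequality $b\ge \gamma_{*}(\gamma_{*}+2)/4$. The only difference is where the tree case sits in the case split, which is purely cosmetic.

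One small slip, which you have inherited verbatim from the paper: the assertion that a tree on $n$ vertices has $\gamma_{*}=n-1$ is false in general (a star has $\gamma_{*}=2$; a complete binary tree on $31$ vertices has $\gamma_{*}=8$), so the step $\gamma_{*}(\gamma_{*}+2)/4 > \gamma_{*}|E|/4$ does not follow as written, and such trees can genuinely land in your Lemma~2 range. The fix is easy and in fact makes the separate tree argument unnecessary: when $|E|=n-1$ and $\gamma_{*}\le \frac{4|E|}{d^{2}}-2$, one has directly
\[
\gamma_{*}|E|\;\le\;\frac{4(n-1)^{2}}{d^{2}}-2(n-1)\;\le\;(n-1)^{2}-2(n-1)\;<\;(n-1)^{2}\;\le\;d^{2}b,
\]
using $d\ge 2$ and Lemma~2 (with $d_{T}=d$, since here $G$ itself is the tree).
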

Since every connected graph has at least one spanning tree, we have
the immediate corollary
\begin{cor}
For every simple connected graph, there is a choice of canonical paths
such that the Poincar\'{e} bound is strictly better than the Cheeger
bound.
\end{cor}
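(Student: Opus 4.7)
The plan is to reduce the corollary directly to Theorem~2. Since every simple connected graph $G$ admits at least one spanning tree $T$ (a standard graph-theoretic fact, provable e.g.\ by deleting edges lying on cycles until none remain), my first step is to fix such a $T$ and take $\Gamma = \Gamma(T)$ to be the unique set of canonical paths along $T$. This is the only routing-construction that enters the argument.

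Next I would invoke Theorem~2 on this choice, which yields $d^{2}b \geq \gamma_{*}|E|$, with strict inequality as soon as $G$ has at least three vertices. To translate this into a strict improvement of Poincar\'{e} over Cheeger, I recall from the introduction the algebraic equivalence noted by the author: the Poincar\'{e} bound $1-\frac{2|E|}{d^{2}\gamma_{*}b}$ strictly beats the Cheeger bound $1-\frac{|E|^{2}}{2d^{4}b^{2}}$ precisely when $4d^{2}b > \gamma_{*}|E|$. Whenever $G$ has at least one edge, so that $\gamma_{*}|E| > 0$, the inequality $d^{2}b \geq \gamma_{*}|E|$ from Theorem~2 gives $4d^{2}b \geq 4\gamma_{*}|E| > \gamma_{*}|E|$, which is the desired strict comparison.

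The only loose end is the handful of degenerate graphs. For $|X|=1$ there is no nontrivial spectrum and the statement is vacuous; for $|X|=2$ one checks directly that $d = \gamma_{*} = b = |E| = 1$, so $4d^{2}b = 4 > 1 = \gamma_{*}|E|$ and Poincar\'{e} again wins strictly. There is no substantive obstacle here: Theorem~2 has absorbed all of the combinatorial work, and the corollary is essentially a packaging statement, converting the conclusion $d^{2}b \geq \gamma_{*}|E|$ into the equivalent criterion $4d^{2}b > \gamma_{*}|E|$ required to compare the two eigenvalue bounds.
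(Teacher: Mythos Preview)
Your proposal is correct and matches the paper's approach exactly: the corollary is presented there as an immediate consequence of Theorem~2 together with the existence of a spanning tree in every connected graph. You have merely spelled out the passage from $d^{2}b \geq \gamma_{*}|E|$ to the strict inequality $4d^{2}b > \gamma_{*}|E|$ and checked the two small-vertex cases by hand, details the paper leaves implicit.
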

It is worth pointing out that since trees are acyclic, these results
also do not depend on whether repeated vertices are allowed. Of course,
the above construction of $\Gamma$ is certainly not optimal as it
inflates both the bottleneck number and the longest path length by
precluding certain combinations of paths. Moreover, different choices
of spanning trees can yield different bounds. For instance, in the
complete graph on $n=2m+1\geq3$ vertices, two possible choices for
$T$ are a Hamiltonian path or the geodesics emanating from some vertex
$x_{0}$. In the first case, we get the Poincar\'{e} bound \[
\beta_{1}\leq1-\frac{4n}{(n-1)^{3}(n+1)}\]
and the Cheeger bound \[
\beta_{1}\leq1-\frac{2n^{2}}{(n-1)^{4}(n+1)^{2}},\]

\begin{flushleft}
while in the second case, the Poincar\'{e} bound is \[
\beta_{1}\leq1-\frac{n}{2(n-1)^{2}}\]
and the Cheeger bound is \[
\beta_{1}\leq1-\frac{n^{2}}{8(n-1)^{4}}.\]

\par\end{flushleft}

\section{Concluding Remarks}

We have established that Poincar\'{e} is not uniformly superior to
Cheeger for all simple connected graphs and Cheeger is not uniformly
superior to Poincar\'{e} for any simple connected graph. Strictly
speaking, this resolves the question put forth by Diaconis and Stroock,
but it would be nice to know whether the best possible Poincar\'{e}
bounds are always better than the best possible Cheeger bounds. A
more complete characterization of the choice of canonical paths for
which Poincar\'{e} beats Cheeger would likely prove helpful in settling
this question. Lemma 1 and Theorems 1 and 2 offer partial results
in this direction and subsume all previous findings, but the matter
is still far from being resolved. Perhaps examining the case of geodesic
routings more closely would shed some light on these issues. Also,
since every connected graph can be obtained by adding edges to a spanning
tree or deleting edges from a complete graph, it may be possible to
obtain related results using surgery methods in conjunction with the
above analyses of these extreme cases. Lastly, and probably of greatest
practical importance, one would like to know more about how to find
routings which yield the best bounds.

\section*{Acknowledgement}

The author would like to thank Jason Fulman for introducing him to
the ideas discussed and for his encouragement and helpful comments
in writing this paper.

\end{document}